\documentclass[12pt,reqno]{amsart}

\usepackage{ytableau}
\usepackage{latexsym,amsmath,amssymb}
\usepackage[normalem]{ulem}
\usepackage{etoolbox}

\usepackage{hyperref}
\hypersetup{
	colorlinks=true, 
	linktoc=all, 
	linkcolor=blue} 

\DeclareMathOperator{\vl}{\raisebox{-3pt}{$\overset{L}{\sim}$\,}}
\DeclareMathOperator{\f}{\raisebox{-3pt}{$\overset{F}{\sim}$\,}}
\DeclareMathOperator{\g}{\raisebox{-3pt}{$\overset{G}{\sim}$\,}}
\DeclareMathOperator{\s}{\raisebox{-3pt}{$\overset{S}{\sim}$\,}}

\renewcommand{\o}[1]{\overline{#1}}

\allowdisplaybreaks[4]

\numberwithin{equation}{section}

\newtheorem{theorem}{Theorem}
\newtheorem*{theorem*}{Theorem}

\theoremstyle{definition}
\newtheorem{definition}{Definition}

\newtheorem*{example*}{Example}

\newtheorem*{conjecture*}{Conjecture}

\newtheorem*{remark*}{Remark}
\newtheorem*{remarks*}{Remarks}

\makeatletter
\patchcmd{\section}{\scshape}{\bfseries\boldmath}{}{}
\patchcmd{\subsection}{\bfseries}{\bfseries\boldmath}{}{}
\renewcommand{\@secnumfont}{\bfseries}
\makeatother

\makeatletter
\newcommand{\oset}[3][0ex]{%
  \mathrel{\mathop{#3}\limits^{
    \vbox to#1{\kern-2\ex@
    \hbox{$\scriptstyle#2$}\vss}}}}
\makeatother


\begin{document}
	
\title[Overpartitions Wherein Only Even Parts Are Overlined]{Enumeration Modulo 4 of Overpartitions Wherein Only Even Parts May Be Overlined}

\author[A.\ Carlson]{Aidan Carlson}
\address[A.\ Carlson]{Department of Mathematics and Statistics, University of Minnesota Duluth, Duluth, MN 55812, USA}
\email{carl6746@umn.edu}
	
\author[B.\ Hopkins]{Brian Hopkins}
\address[B.\ Hopkins]{Department of Mathematics and Statistics, Saint Peter's University, Jersey City, NJ, USA}
\email{bhopkins@saintpeters.edu}

\author[J.\ A.\ Sellers]{James A.\ Sellers}
\address[J.\ A.\ Sellers]{Department of Mathematics and Statistics, University of Minnesota Duluth, Duluth, MN 55812, USA}
\email{jsellers@d.umn.edu}

\subjclass[2010]{11P83; 05A17, 05A19}
	
\keywords{partitions, congruences, overpartitions, generating functions, involutions, combinatorial proofs}

\begin{abstract}
In 2014, as part of a larger study of overpartitions with restrictions of the overlined parts based on residue classes, Munagi and Sellers defined $d_2(n)$ as the number of overpartitions of weight $n$ wherein only even parts can be overlined.  As part of that work, they used a generating function approach to prove a parity characterization for $d_2(n)$.  In this note, we give a combinatorial proof of their result and extend it to a modulus 4 characterization; we provide both generating function and combinatorial proofs of this stronger result.  The combinatorial arguments incorporate classical involutions of Franklin, Glaisher, and Sylvester, along with a recent involution of van Leeuwen and methods new with this work.
\end{abstract}
\maketitle

\section{Introduction}  
A partition $\lambda$ of a positive integer $n$ is a finite sequence of positive integers $\lambda_1 \geq \lambda_2 \geq \dots \geq \lambda_t$ such that $\sum \lambda_i = n$, sometimes called the weight $|\lambda|$ of $\lambda$.  We refer to the integers $\lambda_i$ as the parts and the number of parts $t$ as the length of the partition.  For a given $n$, write $P(n)$ for the set of partitions of $n$ and let $p(n) = |P(n)|$, a notational convention we use throughout the paper.  For example,   
$$P(4) = \{(4), (3,1), (2,2), (2,1,1), (1,1,1,1)\}$$
and $p(4) = 5$.  

There are several subsets of partitions we will consider.  Let $Q(n)$ denote the partitions of $n$ into distinct parts; e.g., $Q(4) = \{(4), (3,1)\}$ and $q(4) = 2$.  Let $P^e(n)$ denote the partitions of $n$ with even length, likewise $P^o(n)$ for odd length.  In contrast, let $P_e(n)$ denote the partitions of $n$ consisting only of even parts, likewise $P_o(n)$ for odd parts.  For example, $P^e(4) = \{(3,1),(2,2),(1,1,1,1)\}$ and $p_o(4) = 2$ from $(3,1)$ and $(1,1,1,1)$ (which we sometimes write as $(1^4)$).  The same notations apply to any set of partitions, e.g., $q^o(4) = 1$ from the partition $(4)$. 

We will assume the reader is familiar with standard concepts related to integer partitions, such as Ferrers diagrams, conjugation (denoted $\lambda'$), hook lengths, and the notation $(q;q)_m$, all covered, for example, in the book of Andrews and Eriksson \cite{AE}.

An overpartition of a positive integer $n$ is a partition of $n$ wherein the first occurrence of a part may be overlined.  
For example, there are 14 overpartitions of 4:  
\begin{gather*}
(4), (\o{4}), (3,1),  (\o{3},1), (3,\o{1}), (\o{3},\o{1}), (2,2),  (\o{2},2),\\
(2,1,1), (\o{2},1,1), (2,\o{1},1), (\o{2},\o{1},1), (1,1,1,1), (\o{1},1,1,1).
\end{gather*}
These were named by Corteel and Lovejoy in 2004 \cite{CL} although, as they discuss, equivalent sets of partitions had previously been considered in various contexts.

Note that an overpartition of $n$ can be viewed as a bipartition $(\mu, \nu)$ with $|\mu| + |\nu| = n$ where $\mu \in P(m)$ for some $m \le n$ corresponds to the nonoverlined parts of the overpartition and $\nu \in Q(n-m)$ corresponds to the overlined parts of the overpartition. 

In 2014, as part of a larger study of overpartitions with restrictions on the overlined parts based on residue classes, Munagi and Sellers \cite{MS} refined the idea of an overpartition by considering $D_2(n)$, the overpartitions of $n$ wherein only even parts can be overlined.  From the previous example, we have $d_2(4) = 8$ and 
$$D_2(4) =  \{(4), (\o{4}), (3,1), (2,2), (\o{2},2), (2,1,1), (\o{2},1,1), (1,1,1,1)\}.$$
In terms of the bipartitions $(\mu, \nu)$, partitions in $D_2(n)$ have $\mu \in P(m)$ for some $m \le n$ and $\nu \in Q_e(n-m)$.

The sequence of $d_2(n)$ values is included in the On-Line Encyclopedia of Integer Sequences \cite[A279328]{OEIS}.  It is straightforward to see that a generating function for $d_2(n)$ is
\begin{equation}
\label{d2gf}
\sum_{n=0}^\infty d_2(n)q^n = \left( \prod_{i\geq 1} 1+q^{2i} \right)  \left( \prod_{i\geq 1} \frac{1}{1-q^i} \right) = \prod_{i\geq 1} \frac{1+q^{2i}}{1-q^i}.
\end{equation}
 
We note, in passing, that this generating function can be interpreted in other ways.  For example, it is easy to see that 
\[ \prod_{i\geq 1} \frac{1+q^{2i}}{1-q^i} = \prod_{i\geq 1} \frac{1-q^{4i}}{(1-q^i)(1-q^{2i})} = \prod_{i\geq 1} \frac{1}{(1-q^i)(1-q^{4i-2})}. \]
Thus, $d_2(n)$ is also the number of partitions of $n$ where parts 2 modulo 4 are allowed to be colored in two different ways.  

We also note that, in the recent work of Kur\c{s}ung\"{o}z and Seyrek \cite{KS}, the authors considered what are called cylindrical partitions with a given profile.  They showed that the generating function for the number of cylindrical partitions with profile $(2,0)$ matches \eqref{d2gf}.

In their 2014 work, Munagi and Sellers \cite[Theorem 4.11]{MS} proved the following parity characterization satisfied by $d_2(n)$ via an elementary generating function manipulation.

\begin{theorem}[Munagi--Sellers]
\label{d2mod2}
For all $n\geq 0$, 
$$d_2(n) \equiv \begin{cases} 
     1 \bmod 2 & \text{if $n = m(3m+1)/2$ for some integer $m$},\\ 
     0 \bmod 2 & \text{otherwise}.
     \end{cases}$$
\end{theorem}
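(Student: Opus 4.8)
The plan is to read off the parity of $d_2(n)$ directly from the product side of \eqref{d2gf} by reducing everything modulo $2$ and then invoking Euler's pentagonal number theorem. The starting observation is that, working with formal power series over $\mathbb{F}_2$, we have $1+q^{2i}\equiv 1-q^{2i}$, so that
$$\sum_{n\ge 0}d_2(n)\,q^n=\prod_{i\ge 1}\frac{1+q^{2i}}{1-q^i}\equiv\prod_{i\ge 1}\frac{1-q^{2i}}{1-q^i}\pmod 2.$$
The right-hand product telescopes via $\tfrac{1-q^{2i}}{1-q^i}=1+q^i$, collapsing the whole expression to $\prod_{i\ge 1}(1+q^i)$, the generating function for partitions into distinct parts.

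Next I would use the same mod-$2$ identity $1+q^i\equiv 1-q^i$, now in the other direction, to rewrite
$$\prod_{i\ge 1}(1+q^i)\equiv\prod_{i\ge 1}(1-q^i)\pmod 2.$$
Euler's pentagonal number theorem then gives
$$\prod_{i\ge 1}(1-q^i)=\sum_{m=-\infty}^{\infty}(-1)^m q^{m(3m-1)/2},$$
and reducing the signs modulo $2$ leaves $\sum_{m}q^{m(3m-1)/2}$. Since replacing $m$ by $-m$ sends $m(3m-1)/2$ to $m(3m+1)/2$, the exponents that appear are exactly the generalized pentagonal numbers $m(3m+1)/2$ from the statement. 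Comparing coefficients of $q^n$ then yields $d_2(n)\equiv 1\pmod 2$ precisely when $n$ has this form, and $0$ otherwise.

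The one point that needs genuine care—and which I expect to be the crux of an otherwise short argument—is that no cancellation occurs in the final sum, i.e.\ that the map $m\mapsto m(3m-1)/2$ on $\mathbb{Z}$ is injective, so that each generalized pentagonal number is hit exactly once and hence contributes a coefficient of $1$ rather than an even number. This is standard (the exponents are strictly increasing on $m\ge 0$ and on $m\le 0$, and the two branches interleave without collision), but it is what guarantees the clean $0/1$ dichotomy, so I would verify this injectivity explicitly before reading off coefficients. A fully combinatorial alternative is also available—interpreting $\prod_{i\ge 1}(1+q^i)$ as partitions into distinct parts and applying Franklin's involution to cancel them in pairs except at the pentagonal fixed points—but the generating-function route above is the most economical way to the stated result.
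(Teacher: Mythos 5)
Your proof is correct, but it takes a genuinely different route from the paper. What you have written is essentially the original generating-function argument of Munagi and Sellers \cite[Theorem 4.11]{MS}: reduce \eqref{d2gf} modulo 2 to get $\prod_{i\ge 1}(1+q^i)$, flip signs again to get $\prod_{i\ge 1}(1-q^i)$, and apply Euler's pentagonal number theorem \eqref{penteq}; your concern about cancellation is easily settled since $\omega_m=\omega_{m'}$ forces $(m-m')\bigl(3(m+m')+1\bigr)=0$, hence $m=m'$. The paper, by contrast, deliberately gives a \emph{combinatorial} proof to complement that existing generating-function one. It introduces half-conjugation on $D_2(n)$ (conjugate the nonoverlined parts, fix the overlined ones); this involution cancels non-fixed points in pairs, so $d_2(n)\equiv h(n)\pmod 2$ where $h(n)$ counts self-half-conjugate overpartitions. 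Theorem \ref{hq} then gives a bijection $H(n)\cong Q(n)$ by applying Sylvester's hook map to the self-conjugate nonoverlined portion and uniting the resulting distinct odd parts with the distinct even overlined parts, and Legendre's formulation \eqref{legeq} of the pentagonal number theorem (realized combinatorially by Franklin's involution) finishes. Note that both routes pass through the same intermediate fact, $d_2(n)\equiv q(n)\pmod 2$: you reach it by telescoping series, the paper by an explicit bijection. Your route buys brevity; the paper's route buys bijective insight (the new half-conjugation operation) and sets up the involutive machinery that is then extended in Theorems \ref{mostlyeven} and \ref{mostlyequal} to prove the modulus-4 refinement combinatorially. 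Your computation is, in effect, the mod-2 shadow of the paper's first proof of Theorem \ref{d2mod4}, where the same product manipulation is carried out modulo 4 after the substitution $q\mapsto -q$.
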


Because the generalized pentagonal numbers will arise frequently, let $\omega_m = m(3m+1)/2$.

Our primary goal in this paper is to extend the above parity result to the following characterization modulo 4.
\begin{theorem}
\label{d2mod4}
For all $n\geq 0$, 
     $$d_2(n) \equiv \begin{cases} 
     1 \bmod 4 & \text{if $n = \omega_{4k}$ or $n = \omega_{4k+3}$},\\ 
     3 \bmod 4 & \text{if $n = \omega_{4k+1}$ or $n = \omega_{4k+2}$},\\ 
     0 \bmod 4 & \text{otherwise}.
     \end{cases}$$
\end{theorem}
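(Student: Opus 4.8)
The plan is to recast the modulus-4 statement as a single power-series congruence and then prove that congruence twice, once by generating functions and once combinatorially. Writing $F(q)=\sum_{n\ge0}d_2(n)q^n$, observe that the residues $1,3,3,1$ attached to $m\equiv0,1,2,3\pmod4$ are exactly the values of $(-1)^{m(m+1)/2}\bmod 4$. Since the generalized pentagonal numbers $\omega_m$ are distinct, Theorem~\ref{d2mod4} is equivalent to
\begin{equation*}
F(q)\;\equiv\;\Theta(q):=\sum_{m=-\infty}^{\infty}(-1)^{m(m+1)/2}\,q^{\omega_m}\pmod 4 .
\end{equation*}
I would prove this congruence and then read off Theorem~\ref{d2mod4} from its coefficients. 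As a consistency check, reducing the same identity modulo $2$ collapses every sign to $+1$ and returns Theorem~\ref{d2mod2}, so the genuinely new content is the refinement of the nonzero residues from $1\bmod 2$ to $\pm1\bmod 4$.

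For the generating-function proof, abbreviate $A=(q;q)_\infty$, $B=(q^2;q^2)_\infty$, $C=(q^4;q^4)_\infty$, so that \eqref{d2gf} gives $F=C/(AB)$ via $\prod_{i\ge1}(1+q^{2i})=C/B$. The engine is the elementary pair $A^2\equiv B$ and $B^2\equiv C\pmod 2$, both instances of $(1-q^i)^2\equiv 1-q^{2i}\pmod 2$. Writing $B=A^2(1+2\beta)$ and $C=B^2(1+2\gamma)$ with $\beta,\gamma\in\mathbb{Z}[[q]]$ (each Euler product is a unit, so these quotients lie in $\mathbb{Z}[[q]]$), one gets $F=A(1+2\beta)(1+2\gamma)\equiv A+2A(\beta+\gamma)\pmod 4$. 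Since $A=\sum_m(-1)^mq^{\omega_m}$ by Euler's pentagonal number theorem, a direct comparison of signs shows $\Theta\equiv A+2\sum_{m\equiv2,3\,(4)}q^{\omega_m}\pmod 4$. Thus the entire problem reduces to the single modulus-$2$ identity $A(\beta+\gamma)\equiv\sum_{m\equiv2,3\,(4)}q^{\omega_m}\pmod 2$. Everything up to this point is bookkeeping; the main obstacle is this last mod-$2$ theta identity, which I expect to settle by the Jacobi triple product together with a $2$-dissection aligning the even- and odd-exponent parts.

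For the combinatorial proof I would use the bipartition model of the introduction: $d_2(n)$ counts pairs $(\mu,\nu)$ with $\mu\in P(m)$ and $\nu\in Q_e(n-m)$, and halving each overlined even part identifies $\nu$ with a partition into distinct parts. The warm-up is the modulus-$2$ statement: because $F\equiv(-q;q)_\infty\pmod 2$, one has $d_2(n)\equiv q(n)\pmod 2$, and an explicit pairing of the ``surplus'' objects of $D_2(n)$ realizes this congruence. Franklin's involution on partitions into distinct parts then finishes the job, since modulo $2$ its signs vanish and $q(n)$ is odd exactly at the $\omega_m$; this recovers Theorem~\ref{d2mod2} combinatorially.

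Upgrading to modulus $4$ is the hard part. Here I would organize $D_2(n)$ under two commuting involutions so that generic objects lie in orbits of size $4$ and $d_2(n)\bmod 4$ is controlled by the orbits of sizes $1$ and $2$: Glaisher's map manages the two ``colors'' on the parts $\equiv2\pmod4$ coming from the distinct overlined even parts, Franklin's map produces the solitary pentagonal fixed points, and the recent van Leeuwen involution together with Sylvester's bijection governs the size-$2$ orbits. The decisive and most delicate step, where ``methods new with this work'' must enter, is arranging these maps to commute and proving that the parity of the number of size-$2$ orbits flips in precisely the pattern that turns the surviving fixed point at $\omega_m$ into $1$ or $3$ modulo $4$ according to $m\bmod 4$; equivalently, one must reconstruct the sign $(-1)^{m(m+1)/2}$ combinatorially rather than merely its parity, with the $q$-series identity above serving as the target against which the combinatorial count is validated.
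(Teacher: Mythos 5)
Your reformulation is correct: the residues $1,3,3,1$ for $m\equiv 0,1,2,3\pmod 4$ are indeed the values of $(-1)^{m(m+1)/2}$, so the theorem is equivalent to $F(q)\equiv\Theta(q)\pmod 4$, and your bookkeeping ($F=A(1+2\beta)(1+2\gamma)$, the comparison $\Theta\equiv A+2\sum_{m\equiv2,3\,(4)}q^{\omega_m}\pmod 4$) is valid. But the proposal then stops at its central step: the ``mod-$2$ theta identity'' $A(\beta+\gamma)\equiv\sum_{m\equiv2,3\,(4)}q^{\omega_m}\pmod 2$ is not proven, only flagged as something you ``expect to settle'' by the Jacobi triple product and a $2$-dissection. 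Since your reduction is reversible, that residual identity is exactly as strong as the theorem itself, so nothing has actually been established. The same is true of your combinatorial half: naming Glaisher, Franklin, Sylvester, and van Leeuwen and asking that two involutions commute with generic orbits of size $4$ is a plan, not a construction; no maps are defined, no fixed points are analyzed. (For comparison, the paper's combinatorial proof splits $D_2(n)$ by length parity and proves two precise statements --- that $d_2^e(n)$ and $d_2^o(n)$ are individually even away from specified pentagonal numbers, via toggling the largest even part's overline combined with Glaisher--Franklin--Glaisher, and that $d_2^e(n)-d_2^o(n)=(-1)^m$ at $n=\omega_m$ and $0$ otherwise, via van Leeuwen's involution on the nonoverlined parts combined with Franklin's --- and then adds them.)

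The frustrating part is that your generating-function gap has a one-line fix that you walked right past. Applying $q\mapsto -q$ to Euler's pentagonal number theorem gives
\[
\Theta(q)\;=\;\sum_{m=-\infty}^{\infty}(-1)^{m(m+1)/2}q^{\omega_m}\;=\;\sum_{m=-\infty}^{\infty}(-1)^{m+\omega_m}q^{\omega_m}\;=\;\prod_{i\geq 1}\bigl(1-(-q)^i\bigr)\;=\;A(-q),
\]
so your target congruence $F(q)\equiv\Theta(q)\pmod 4$ is the same as $F(-q)\equiv A(q)\pmod 4$. That form needs no dissection at all: writing
\[
F(-q)\;=\;\prod_{i\geq 1}\frac{1+q^{2i}}{1-(-q)^i}\;=\;\prod_{i\geq 1}\frac{1+q^{2i}}{(1-q^{2i})(1+q^{2i-1})}\;=\;\prod_{i\geq 1}\frac{(1-q^{4i})^2(1-q^i)}{(1-q^{2i})^4},
\]
and invoking the binomial-theorem congruence $(1-q^{2i})^4\equiv(1-q^{4i})^2\pmod 4$, the right side collapses to $\prod_{i\geq1}(1-q^i)=A(q)$ modulo $4$. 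This is precisely the paper's first proof; your $\beta,\gamma$ detour replaces this elementary cancellation with an unproven identity that carries the full weight of the theorem.
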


In the work below, after giving a combinatorial proof of Theorem \ref{d2mod2}, we provide both generating function and combinatorial proofs of Theorem \ref{d2mod4}.  Together, the combinatorial proofs incorporate classical partition maps of Franklin, Glaisher, and Sylvester, along with a newer one by Marc van Leeuwen, which we discuss in the next section.

\section{Partition Involutions}
Here we review several classical partition involutions and introduce a relatively new one into the literature.  

\subsection{Involutions from the 1880s}
We will use three involutions from the early 1880s, when Sylvester began the combinatorial study of partitions.  The first two relate to results of Euler.

Euler used generating functions to prove the first result establishing the equal count of two types of restricted partitions, namely $p_o(n) = q(n)$,
the ``odd-distinct'' partition identity.  In 1883, Glaisher generalized this identity, providing both generating function and combinatorial proofs of his result \cite{G}.  We will only need Glaisher's combinatorial method for Euler's result: In a partition with distinct parts, even parts are split into halves until all parts are odd.  In a partition with odd parts, any two repeated parts are merged into a single part until all parts are distinct.  Table \ref{gtab} gives Glaisher's example for the involution; see \cite[p.\ 6]{A} and \cite[pp.\ 8--9]{AE} for further details.  Note that the fixed points of Glaisher's map are $Q_o(n)$, the partitions of $n$ consisting of distinct odd parts.

We write $(3,1^6) \g (4,3,2)$ to indicate partitions connected by Glaisher's involution.  Equivalently, we will sometimes denote that same relation by $G((3,1^6)) = (4,3,2)$ and $G((4,3,2)) = (3,1^6)$.  

\begin{table}[th]
\renewcommand{\arraystretch}{1.25}
\begin{tabular}{r|l}
$P_o(9)$ & $Q(9)$ \\ \hline
$(9)$ &$(9)$ \\
$(7,1,1)$ & $(7,2)$ \\
$(5,3,1)$ & $(5,3,1)$ \\
$(5,1^4)$ &  $(5,4)$ \\
$(3,3,3)$ & $(6,3)$ \\
$(3,3,1,1,1)$ & $(6,2,1)$ \\
$(3,1^6)$ & $(4,3,2)$ \\
$(1^9)$ & $(8,1)$
\end{tabular}
\caption{Glaisher's involution for $n = 9$.} \label{gtab}
\end{table}

Another celebrated result of Euler is the pentagonal number theorem,
\begin{equation}
\prod_{i\geq 1} (1-q^i) = \sum_{j=-\infty}^\infty (-1)^jq^{j(3j+1)/2} \label{penteq}
\end{equation}
(see Hirschhorn \cite[\S 1.6]{H} for a proof), which leads to a recursive formula to compute $p(n)$.  Legendre gave the following equivalent formulation of the pentagonal number theorem \cite[\S458]{L}:
\begin{equation}
q^e(n) - q^o(n) = \begin{cases} (-1)^m & \text{if $n = \omega_m$,} \\ 0 & \text{otherwise.} \end{cases} \label{legeq}
\end{equation}
For example, from the right-hand side of Table \ref{gtab}, one can see that $q^e(9) = q^o(9) = 4$.
In 1881, Franklin gave an ingenious combinatorial proof of Legendre's version of the result, a near-bijection that leaves one partition unpaired when $n$ is a generalized pentagonal number \cite{F}.  The map involves the last part of a partition and the connected diagonal of rightmost boxes starting in the first row of the Ferrers diagram; Figure \ref{fex} shows $(6,5,3) \f (5,4,3,2)$ and \cite[pp. 9--13]{A} and \cite[pp. 24--27]{AE} provide full details.

\begin{figure}[th]
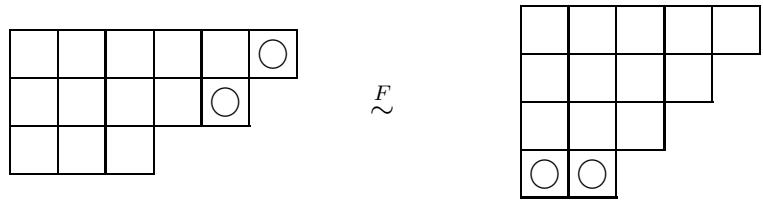

\centering
\ytableausetup{centertableaux}
\begin{ytableau}
\none & & & & & & \bigcirc \\
\none & & & & & \bigcirc  \\
\none & & & 
\end{ytableau}
\qquad $\f$ \qquad
\begin{ytableau}
\none & & & & &  \\
\none & & & & \\
\none & & &  \\
\none & \bigcirc & \bigcirc 
\end{ytableau}
\caption{Franklin's map pairs $(6,5,3)$ and $(5,4,3,2)$.} \label{fex}
\end{figure}

Table \ref{ftab} gives Franklin's correspondence for $Q(12)$. Note that the unpaired partition $(5,4,3)$ has an odd number of parts so that $q^e(12) - q^o(12) = -1$, consistent with $12 = \omega_{-3}$.  In the case of the next smaller generalized pentagonal number, $w_2 = 7$, the unpaired partition $(4,3)$ has even length and one can verify from the five partitions of $Q(7)$ that $q^e(7) - q^o(7) = 1$.

\begin{table}[th]
\renewcommand{\arraystretch}{1.25}
\begin{tabular}{r|l}
$Q^e(12)$ & $Q^o(12)$ \\ \hline
$(11,1)$ &$(12)$ \\
$(10,2)$ & $(9,2,1)$ \\
$(9,3)$ & $(8,3,1)$ \\
$(8,4)$ &  $(7,4,1)$ \\
$(7,5)$ & $(6,5,1)$ \\
$(6,3,2,1)$ & $(7,3,2)$ \\
$(5,4,2,1)$ & $(6,4,2)$ \\
 & $(5,4,3)$
\end{tabular}
\caption{Franklin's near-pairing of $Q(12)$.}  \label{ftab}
\end{table}

The last classical involution we will use is one from Sylvester.  In his 1882 magnum opus \cite{S}, he established the identity
\[p( n \mid \text{self-conjugate}) = q_o(n),\]
i.e., the number of $\lambda \in P(n)$ with $\lambda = \lambda'$ equals the number of partitions of $n$ into distinct odd parts.  The correspondence simply makes the diagonal hooks of a self-conjugate partition into parts.  Figure \ref{sex} shows $(5,3,3,1,1) \s (9, 3, 1)$; see \cite[p. 14]{A} and \cite[p. 18]{AE} for more details.  Table \ref{stab} gives Sylvester's correspondence when $n = 20$.

\begin{figure}[th]
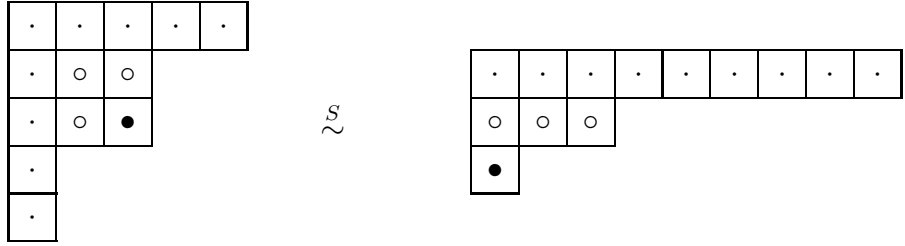

\centering
\ytableausetup{centertableaux}
\begin{ytableau}
\none & \cdot & \cdot & \cdot & \cdot & \cdot  \\
\none & \cdot & \circ &  \circ \\
\none & \cdot & \circ &  \bullet \\
\none & \cdot \\
\none & \cdot
\end{ytableau}
\qquad $\s$ \qquad
\begin{ytableau}
\none & \cdot & \cdot & \cdot & \cdot & \cdot & \cdot & \cdot & \cdot & \cdot \\
\none & \circ & \circ & \circ \\
\none & \bullet
\end{ytableau}
\caption{Sylvester's map pairs $(5,3,3,1,1)$ and $(9,3,1)$.} \label{sex}
\end{figure}

\begin{table}[th]
\renewcommand{\arraystretch}{1.25}
\begin{tabular}{r|l}
 $P(20 \mid \text{self-conjugate})$ & $Q_o(20)$ \\ \hline
$(10,2,1^8)$ &	$(19,1)$ \\
$(9,3,2,1^6)$ &  $(17,3)$ \\
$(8,4,2,2,1^4)$ & $(15,5)$ \\
$(7,5,2^3,1,1)$ & $ (13,7)$ \\
$(6,6,2^4)$ & $(11,9)$ \\
$(6,4^3,1,1)$ & $(11,5,3,1)$ \\
$(5,5,4,4,2)$ & $(9,7,3,1)$
\end{tabular}
\caption{Sylvester's involution for $n = 20$.} \label{stab}
\end{table}

\subsection{Van Leeuwen's involution}
A much more recent involution was first devised (we believe) in 2011 by Marc van Leeuwen as part of an answer to a question on the site Mathematics Stack Exchange \cite{vL}.

Let $R(n) = P(n) \setminus Q_o(n)$, the partitions of $n$ that do not consist of distinct odd parts, with $r^e(n)$ and $r^o(n)$ denoting the number of partitions in $R(n)$ with even (respectively, odd) length.  Van Leeuwen's involution, a more subtle split and merge procedure than Glaisher's bijection, provides a combinatorial proof of the identity $r^e(n) = r^o(n)$ for each $n \ge 2$.

Given $\lambda \in R(n)$ for $n \ge 2$:
\begin{itemize}
\item Let $m$ be the least positive odd integer such that
the set of parts of the form $2^km$ for $k \ge 0$
is nonempty and not a single part $m$.
\end{itemize}
(Since $\lambda$ is not the empty partition, there are such parts for at least one odd number $m$.  Since $\lambda \notin Q_o(n)$, it cannot be the case that the list of parts is a single $m$ for every odd number $m$.)  
\begin{itemize}
\item Choose the maximal $k$ such that some part $\lambda_i = 2^km$.  
\item If the part $2^km$ is unique in $\lambda$, then we know $k \ne 0$ by the choice of $m$; split $\lambda_i$ into two new parts each $2^{k-1} m$.  
\item If $2^km$ appears two or more times in $\lambda$, then merge two of those parts to make a new part $2^{k+1}m$.
\end{itemize}
In either case, the resulting partition is in $R(n)$ and the length has changed by one.  For example, $(6,6,5,5,1)$ has $m = 3$ and the two parts 6 are merged, therefore $(6,6,5,5,1) \vl (12,5,5,1)$.  See Table \ref{ltab} for van Leeuwen's involution when $n = 8$.

\begin{table}[th]
\renewcommand{\arraystretch}{1.25}
\begin{tabular}{r|l}
$R^e(8)$ & $R^o(8)$ \\ \hline
$(6,2)$ &	$(6,1,1)$ \\
$(5,1^3)$ & $(5,2,1)$\\
$(4,4)$ &  $(8)$ \\
$(4,2,1,1)$ & $(2^3,1,1)$ \\
$(3,3,1,1)$ & $ (3,3,2)$ \\
$(3,2,2,1)$ & $(4,3,1)$ \\
$(3,1^5)$ & $(3,2,1^3)$ \\
$(2^4)$ & $(4,2,2)$ \\
$(2,2,1^4)$ & $(4,1^4)$ \\
$(1^8)$ & $(2,1^6)$
\end{tabular}
\caption{Van Leeuwen's pairing of $R(8)$.} \label{ltab}
\end{table}

\section{Combinatorial Proof of Theorem \ref{d2mod2}}
Using Sylvester's involution described above, we give a combinatorial proof of Theorem \ref{d2mod2}.  
This complements the existing generating function proof of \cite[Theorem 4.11]{MS} and introduces a variant of conjugation on overpartitions (Corteel and Lovejoy \cite[p. 1628]{CL} define overpartition conjugation in a different way).

\begin{definition}
The half-conjugate of an overpartition fixes any overlined parts and conjugates the subpartition consisting of any nonoverlined parts.
\end{definition}
For example, the half-conjugate of $(3,\o{2},1)$ is $(\o{2},2,1,1)$ since the overlined part $\o{2}$ is fixed and $(3,1)' = (2,1,1)$.  Since conjugation is an involution, so is half-conjugation.  Since the overlined parts do not change, $D_2(n)$ is closed under half-conjugation.  Therefore, half-conjugation partitions $D_2(n)$ into half-conjugate pairs and self-half-conjugate singletons.

Let $H(n)$ be the set of self-half-conjugate partitions in $D_2(n)$.  The left-hand side of Table \ref{hctab} gives $H(8)$.  Computing $H(n)$ for small $n$ suggests that $h(n) = q(n)$.  The next theorem gives a bijective proof of this relation.

\begin{theorem} \label{hq}
For all $n\geq 1$, the number of self-half conjugate partitions in $D_2(n)$ equals the number of partitions of $n$ into distinct parts, i.e., $h(n) = q(n)$.    
\end{theorem}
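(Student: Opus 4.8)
The plan is to read off the structure of $H(n)$ from the bipartition description of overpartitions in $D_2(n)$ and then feed it through Sylvester's involution. Recall that an overpartition in $D_2(n)$ is encoded by a pair $(\mu,\nu)$ with $\mu \in P(m)$ the ordinary partition of nonoverlined parts and $\nu \in Q_e(n-m)$ the partition of distinct even overlined parts. By definition, half-conjugation fixes $\nu$ and replaces $\mu$ by $\mu'$, so an overpartition is self-half-conjugate exactly when $\mu = \mu'$ is self-conjugate and $\nu$ is an arbitrary partition into distinct even parts. Thus $H(n)$ is in bijection with pairs $(\mu,\nu)$ where $\mu=\mu'$ and $\nu \in Q_e(n-|\mu|)$. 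One expects $h(n)=q(n)$ on generating-function grounds, since Sylvester's identity gives $\prod_{i\geq 1}(1+q^{2i-1})$ for self-conjugate partitions, and multiplying by the distinct-even generating function $\prod_{i\geq 1}(1+q^{2i})$ yields $\prod_{i\geq 1}(1+q^i)$, the generating function for $q(n)$.

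First I would send each pair $(\mu,\nu)$ through Sylvester's map. Applying $\s$ to the self-conjugate partition $\mu$ produces a partition in $Q_o(|\mu|)$, and this is a bijection between self-conjugate partitions and partitions into distinct odd parts of the same weight. The overlined component $\nu$ already consists of distinct even parts. The key observation is that the multiset union of $\s(\mu)$ and $\nu$ is automatically a partition into distinct parts: every part coming from $\s(\mu)$ is odd and every part from $\nu$ is even, so no two can coincide. Reordering weakly decreasingly, this produces an element of $Q(n)$.

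To invert, given $\rho \in Q(n)$, I would split $\rho$ by parity into its subpartition $\rho_{\mathrm{odd}}$ of odd parts and $\rho_{\mathrm{even}}$ of even parts; both have distinct parts, with $\rho_{\mathrm{odd}} \in Q_o$ and $\rho_{\mathrm{even}} \in Q_e$. Applying the inverse of Sylvester's map to $\rho_{\mathrm{odd}}$ returns a self-conjugate partition $\mu$, and setting $\nu = \rho_{\mathrm{even}}$ recovers a self-half-conjugate overpartition $(\mu,\nu)$ in $D_2(n)$. Since splitting a partition by parity and recombining are mutually inverse, and Sylvester's correspondence is a bijection, the composite is a bijection $H(n) \to Q(n)$, so $h(n)=q(n)$.

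I do not anticipate a serious obstacle once the self-half-conjugate overpartitions are correctly identified; the real content is the recognition that Sylvester's involution converts the self-conjugate nonoverlined part into distinct odd parts, which then merge with the distinct even overlined parts purely by parity. The only points demanding care are confirming that $H(n)$ consists of exactly the pairs with self-conjugate $\mu$, and checking that the parity decomposition is genuinely inverse to the union, both of which are routine.
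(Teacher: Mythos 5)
Your proposal is correct and follows essentially the same route as the paper's own proof: apply Sylvester's correspondence to the self-conjugate nonoverlined component to get distinct odd parts, take the union with the distinct even overlined parts, and invert by splitting a distinct-parts partition by parity. The generating-function remark is a nice sanity check but otherwise the argument coincides with the published one.
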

    
\begin{proof}
We establish a bijection $H(n) \cong Q(n)$.

Write $\lambda \in H(n)$ as a bipartition $(\mu, \nu)$ where $\mu$ consists of the nonoverlined parts of $\lambda$ and $\nu$ corresponds to the overlined parts of $\lambda$.  Since $\lambda$ is self-half-conjugate, we know $\mu = \mu'$.

We claim that the partition with parts $S(\mu) \cup \nu$ is in $Q(n)$.  Sylvester's involution takes the self-conjugate $\mu$ to a partition of distinct odd parts, i.e., $S(\mu) \in Q_o(m)$ for some $m \le n$.  By the definition of $D_2(n)$, we know that $\nu$ consists of distinct even parts, so $\nu \in Q_e(n-m)$.  Therefore $S(\mu) \cap \nu = \varnothing$ and $S(\mu) \cup \nu$ gives a distinct part partition of $n$.

For the reverse map, given $\pi \in Q(n)$, partition it as $(\rho, \sigma)$ where $\rho$ consists of the odd parts of $\pi$ and $\sigma$ consists of the even parts of $\pi$.  We claim that the bipartition $(S(\rho), \sigma)$ corresponds to a partition in $H(n)$.  Since $\rho$ consists of distinct odd parts, Sylvester's involution can be applied and $S(\rho)$ is self-conjugate.  Associate $\sigma$, consisting of distinct even parts, with the overlined parts which are fixed by half-conjugation.

The two maps are clearly inverses, establishing the bijection, and the enumeration result follows.
\end{proof}

See Table \ref{hctab} for the $n = 8$ case of the bijection.

\begin{table}[th]
\renewcommand{\arraystretch}{1.25}
\begin{tabular}{r|l}
$H(8)$ & $Q(8)$ \\ \hline
$(\o{8})$ & $(8)$ \\
$(\o{6}, \o{2})$ & $(6, 2)$ \\
$(\o{4},2,2)$ & $(4,3,1)$ \\
$(3,\o{2},2,1)$ & $(5,2,1)$ \\
$(4,2,1,1)$ & $(7,1)$ \\
$(3,3,2)$ & $(5,3)$
\end{tabular}
\caption{The bijection of Theorem \ref{hq} for $n = 8$.} \label{hctab}
\end{table}

Now we can immediately provide a combinatorial proof of Theorem \ref{d2mod2} as a corollary of Theorem \ref{hq}.  

\begin{proof}[Proof of Theorem \ref{d2mod2}] 
For all $n$, note that $d_2(n) \equiv h(n) \bmod{2}$ since removing the half-conjugate pairs from $D_2(n)$ does not change the parity of $d_2(n)$.  By Theorem \ref{hq} we have $d_2(n) \equiv q(n) \bmod{2}$ and the result follows from \eqref{legeq}.  \end{proof}

\section{Proofs of Theorem \ref{d2mod4}}
We now prove our primary result, Theorem \ref{d2mod4}, which extends the original modulo 2 result of Munagi and Sellers to modulus 4.

The first proof uses the generating function for $d_2(n)$ \eqref{d2gf}, Euler's pentagonal number theorem \eqref{penteq}, and the following result, which follows essentially from the binomial theorem: For a prime $p$ and positive integers $k$ and $\ell$,
\begin{equation}
(1-q^k)^{p^\ell} \equiv (1-q^{pk})^{p^{\ell-1}} \bmod{p^\ell}. \label{bin}
\end{equation}

\begin{proof}[First proof of Theorem \ref{d2mod4}]
Consider \eqref{d2gf} with $q$ replaced by $-q$ throughout:
\begin{align*}
\sum_{n=0}^\infty d_2(n)(-q)^n  &= \prod_{i\geq 1} \frac{1+q^{2i}}{1-(-q)^i} \\
&= \prod_{i\geq 1} \frac{1+q^{2i}}{(1-q^{2i})(1+q^{2i-1})} \\
&=  \prod_{i\geq 1} \left( \frac{1-q^{4i}}{(1-q^{2i})^2}\right) \left(\frac{1+q^{2i}}{1+q^{i}}\right) \\
&=  \prod_{i\geq 1} \frac{(1-q^{4i})^2(1-q^i)}{(1-q^{2i})^4} \\
&\equiv   \prod_{i\geq 1} 1-q^i  \bmod{4} 
\end{align*}
using elementary generating function manipulations and, in the last step, applying \eqref{bin} to the denominator.  By \eqref{penteq}, we have
\begin{equation}
 \sum_{n=0}^\infty (-1)^n d_2(n)q^n \equiv \sum_{j=-\infty}^\infty (-1)^jq^{j(3j+1)/2}  \bmod{4}.  \label{neggf}
\end{equation}
We see immediately that $d_2(n) \equiv 0 \bmod{4}$ if $n$ is not a generalized pentagonal number.

There are four cases to consider when $n$ is a generalized pentagonal number.  
If $n = \omega_{4k}$, i.e., $n = (2k)(12k+1)$, 
then \eqref{neggf} gives
$$(-1)^{(2k)(12k+1)}d_2(\omega_{4k}) \equiv (-1)^{4k} \bmod{4}$$
and thus
$d_2(\omega_{4k}) \equiv 1 \bmod{4}$.

If $n = \omega_{4k+1}$, i.e., $n = (4k+1)(6k+2)$, 
then \eqref{neggf} gives
$$(-1)^{(4k+1)(6k+2)}d_2(\omega_{4k+1}) \equiv (-1)^{4k+1} \bmod{4}$$
and thus
$d_2(\omega_{4k+1}) \equiv -1 \bmod{4}$.

If $n = \omega_{4k+2}$, i.e., $n = (2k+1)(12k+7)$, 
then \eqref{neggf} gives
$$(-1)^{(2k+1)(12k+7)}d_2(\omega_{4k+2}) \equiv (-1)^{4k+2} \bmod{4}$$
and thus
$d_2(\omega_{4k+2}) \equiv -1 \bmod{4}$.

If $n = \omega_{4k+3}$, i.e., $n = (4k+3)(6k+5)$, 
then \eqref{neggf} gives
$$(-1)^{(4k+3)(6k+5)}d_2(\omega_{4k+3}) \equiv (-1)^{4k+3} \bmod{4}$$
and thus
$d_2(\omega_{4k+3}) \equiv 1 \bmod{4}$.
\end{proof}

For the combinatorial proof of Theorem \ref{d2mod4}, we do not partition $D_2(n)$ into disjoint 4-tuples when $n$ is not a generalized pentagonal number.  Instead, we consider $d^e_2(n)$ and $d^o_2(n)$, the number of partitions in $D_2(n)$ with even (respectively, odd) length, and carefully combine two near-bijections.  The data shown in Table \ref{d2eo} suggest our approach: The counts are almost all even and $d^e_2(n) = d^o_2(n)$ almost always.  The precise statements are given in the next two theorems.

\begin{table}[th]
\begin{tabular}{r|r|r}
$n$  & $d_2^e(n)$ & $d_2^o(n)$  \\ \hline
1 & 0 & 1  \\ 
2 & 1 & 2  \\ 
3 & 2 & 2  \\ 
4 & 4 & 4  \\ 
5 & 6 & 5  \\ 
6 & 10 & 10  \\ 
7 & 14 & 13  \\ 
8 & 22 & 22  \\ 
9 & 30 & 30  \\ 
10 & 46 & 46  \\ 
11 & 62 & 62  \\ 
12 & 91 & 92  \\ 
13 & 122 & 122  \\ 
14 & 174 & 174  \\ 
15 & 230 & 231  \\ 
\end{tabular}
\hskip .5in
\begin{tabular}{r|r|r}
$n$  & $d_2^e(n)$ & $d_2^o(n)$  \\ \hline
16 & 320 & 320  \\ 
17 & 420 & 420  \\ 
18 & 572 & 572  \\ 
19 & 744 & 744  \\ 
20 & 996 & 996  \\ 
21 & 1286 & 1286  \\ 
22 & 1697 & 1696  \\ 
23 & 2174 & 2174  \\ 
24 & 2834 & 2834  \\ 
25 & 3606 & 3606  \\ 
26 & 4651 & 4650  \\ 
27 & 5880 & 5880  \\ 
28 & 7512 & 7512  \\ 
29 & 9440 & 9440  \\ 
30 & 11962 & 11962  \\ 
\end{tabular}
\caption{The number of partitions of $D_2(n)$ by length parity through $n = 30$.} \label{d2eo}
\end{table}

The first of our two combinatorial results in this section incorporates the involutions of Glaisher and Franklin discussed in \S2.

\begin{theorem}
\label{mostlyeven} 
 For all $n \ge 0$,
 \begin{align*}
 d^e_2(n) & \equiv \begin{cases} 1 \bmod{2} & \text{if $n = \omega_{4k}$ or $n = \omega_{4k+1}$,} \\ 0 \bmod{2} & \text{otherwise;} \end{cases} \\
 d^o_2(n) & \equiv \begin{cases} 1 \bmod{2} & \text{if $n = \omega_{4k+2}$ or $n = \omega_{4k+3}$,} \\ 0 \bmod{2} & \text{otherwise.} \end{cases}
 \end{align*}
\end{theorem}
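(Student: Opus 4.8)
The plan is to prove both congruences at once by pushing the length statistic through a short chain of involutions, ending at Franklin's map, where the modulus-4 behaviour of the pentagonal index will surface.

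\emph{Step 1: reduce to partitions into odd parts.} I would first define a weight- and length-preserving involution $\tau$ on $D_2(n)$. Writing an element as a bipartition $(\mu,\nu)$ with $\nu \in Q_e$ the overlined parts, let $2j$ be the largest even value occurring in either $\mu$ or $\nu$. If $2j$ is overlined, delete the overline and adjoin one more ordinary part $2j$ to $\mu$; if $2j$ is not overlined but occurs in $\mu$, overline one copy, moving it into $\nu$. Since $2j\notin\nu$ in the second case, $\nu$ stays strict, and in both cases the weight and the total number of parts are unchanged, so $\tau$ is a length-preserving involution. Its fixed points are exactly the overpartitions with no even part whatsoever, i.e.\ the partitions of $n$ into odd parts with nothing overlined. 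Because $\tau$ preserves length parity, $d_2^e(n)\equiv p_o^e(n)$ and $d_2^o(n)\equiv p_o^o(n) \pmod 2$, where $p_o^e,p_o^o$ count partitions into odd parts by length parity.

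\emph{Step 2: transport length parity through Glaisher.} Applying Glaisher's bijection $G$ between partitions into odd parts and partitions into distinct parts, I would track how length parity transforms. Under $G^{-1}$ a distinct part $2^a m$ (with $m$ odd) splits into $2^a$ copies of $m$, so the length of $G^{-1}(\pi)$ is $\sum_{p\in\pi} 2^{v_2(p)}$, which is congruent modulo $2$ to the number of odd parts of $\pi$. Hence $p_o^e(n)$ equals the number of $\pi\in Q(n)$ with an even number of odd parts, and $p_o^o(n)$ the number with an odd number of odd parts.

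\emph{Step 3: apply Franklin and locate the fixed point.} The key observation is that Franklin's near-involution on $Q(n)$ \emph{preserves the parity of the number of odd parts}: in the branch that deletes the smallest part $s$ and adds $1$ to each of the top $s$ parts, deleting $s$ changes the number of odd parts by $s \bmod 2$ and the $s$ unit-additions change it by a further $s \bmod 2$, for a net even change (the other branch is the inverse). Thus Franklin pairs all of $Q(n)$ within each parity class, except for the single unpaired partition when $n=\omega_m$. That fixed partition is a block of $|m|$ consecutive integers, and a direct count shows its number of odd parts is even exactly when $m\equiv 0,1 \pmod 4$ and odd exactly when $m\equiv 2,3\pmod 4$. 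Combining the three steps yields the two stated congruences, using \eqref{legeq} to identify the pentagonal indices.

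I expect the crux to be Step 3: both recognizing that the relevant invariant of Franklin's map is the parity of the number of odd parts (rather than the length, which Franklin flips) and carrying out the parity count on the staircase fixed point, since it is precisely this count that splits $m\bmod 4$ into the four cases and thereby produces the modulus-4 refinement. Verifying the involution and strictness claims in Step 1 and the valuation computation in Step 2 should be routine by comparison.
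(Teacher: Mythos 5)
Your proposal is correct and is essentially the paper's own argument: the same overline-toggle on the largest even part reduces everything modulo $2$ to partitions into odd parts, and Glaisher's and Franklin's maps finish the job, with the unpaired staircase at $n=\omega_m$ producing the modulo-$4$ split of $m$. The only cosmetic difference is that the paper composes the maps as $G\circ F\circ G$ acting on $P_o(n)$ and tracks length parity there (using that every partition of $n$ into odd parts has length $\equiv n \bmod 2$), whereas you transport the count into $Q(n)$ and track the parity of the number of odd parts --- the same invariant, since both are congruent to $n \bmod 2$.
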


\begin{proof}
First, consider $D^e_2(n)$ when $n$ is not a generalized pentagonal number.  We describe an involution on $D^e_2(n)$ with no fixed points which allows us to conclude that $d^e_2(n)$ is even.  If a partition in $D^e_2(n)$ has any even parts, then toggle whether the largest even part is overlined.  This leaves partitions with only odd parts (so none are overlined), i.e., $\lambda \in P^e_o(n)$ (which occurs only when $n$ is even). We find its match by applying a composition of involutions: Glaisher's, Franklin's, and then Glaisher's again.  The first application of Glaisher's involution gives a partition in $Q(n)$.  Franklin's involution takes $G(\lambda)$ to a different partition in $Q(n)$.  The application of Glaisher's involution to $F(G(\lambda))$ gives a partition in $P^e_o(n)$ necessarily not equal to $\lambda$.  Because all elements of $D^e_2(n)$ are partitioned into disjoint pairs, $d^e_2(n)$ is even.

If $n$ is an odd generalized pentagonal number, i.e., when $n = \omega_{4k+2}$ or $n = \omega_{4k+3}$, then every partition in $D^e_2(n)$ has at least one even part and the toggling described above gives an involution with no fixed points on $D^e_2(n)$ so that, again, $d^e_2(n)$ is even.

This leaves the case that $n$ is an even generalized pentagonal number, i.e., $n = \omega_{4k}$ or $n = \omega_{4k+1}$.  Here, $P^e_o(n) = P_o(n)$ is nonempty.
For $n = \omega_{4k}$, the images of $P_o(n)$ under Glaisher's map include $(8k, 8k-1, \ldots, 4k+1)$ if $k$ is positive or $(8k-1, 8k-2, \ldots, 4k)$ if $k$ is negative, each a partition where Franklin's involution is not defined.  For $n = \omega_{4k+1}$, the problematic images of $P_o(n)$ under Glaisher's map are $(8k+2, 8k+1, \ldots, 4k+2)$ if $k$ is positive and $(8k+1, 8k, \ldots, 4k+1)$ if $k$ is negative.  In each of these cases, the exception to Franklin's involution means that one partition is unpaired in the involution and $d^e_2(n)$ is odd.

For $D^o_2(n)$, the same involution shows that $d^o_2(n)$ is even except when $n = \omega_{4k+2}$ or $n = \omega_{4k+3}$.  In those two cases, $n$ is an odd generalized pentagonal number and the image of $P^o_o(n) = P_o(n)$ under Glaisher's involution includes $(8k+4, 8k+3, \ldots, 4k+2)$ or $(8k+5, 8k+4, \ldots, 4k+3)$ (for $n = \omega_{4k+2}$) or $(8k+6, 8k+5, \ldots, 4k+3)$ or $(8k+7, 8k+6, \ldots, 4k+4)$ (for $n = \omega_{4k+3}$) for which Franklin's involution is not defined, so  $d^o_2(n)$ is odd in those cases.
\end{proof}

See Table \ref{mostlyevenex} for an example with both types of pairings (toggling whether the greatest even part is overlined and the composition of classical involutions) and one unmatched partition.

\begin{table}[th]
\renewcommand{\arraystretch}{1.25}
\begin{tabular}{c}
$(7) \g (7) \f (6,1) \g (3,3,1)$ \\
$(5,1,1) \g (5,2) \f (4,2,1) \g (1^7)$ \\
$(4,2,1) \sim (\o{4},2,1)$ \\
$(4,\o{2},1) \sim (\o{4}, \o{2}, 1)$ \\
$(3,2,2) \sim (3, \o{2}, 2)$ \\
$(3,1^4) \g (4,3) \f$  \textreferencemark \\
$(2,2,1^3) \sim (\o{2}, 2,1^3)$
\end{tabular}
\caption{Theorem \ref{mostlyeven}'s near-pairing of partitions in $D^o_2(7)$ where the unlettered $\sim$ indicates toggling the overline of the largest even part and \textreferencemark {} indicates a case where Franklin's involution is not defined.} \label{mostlyevenex}
\end{table}

The second of our two combinatorial proofs in this section uses the relatively new involution of van Leeuwen presented in \S 2 and, again, Franklin's.

\begin{theorem}
\label{mostlyequal}    
For all $n \ge 0$,
\[d^e_2(n) - d^o_2(n) = \begin{cases} (-1)^m & \text{if $n = \omega_m$,} \\ 0 & \text{otherwise.} \end{cases}\]
\end{theorem}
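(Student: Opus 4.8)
The plan is to read $d^e_2(n) - d^o_2(n)$ as the signed sum $\sum_{\lambda \in D_2(n)} (-1)^{\ell(\lambda)}$ and to evaluate it by a chain of sign-respecting operations that reduce $D_2(n)$ all the way down to $Q(n)$, where Franklin's near-bijection and Legendre's identity \eqref{legeq} finish the job. Writing each $\lambda \in D_2(n)$ as a bipartition $(\mu,\nu)$ with $\mu \in P(m)$ the nonoverlined parts and $\nu \in Q_e(n-m)$ the overlined parts, the total length is $\ell(\lambda) = \ell(\mu) + \ell(\nu)$, so the sign $(-1)^{\ell(\lambda)}$ factors through the two components and any length-changing operation on a single component reverses the overall sign.

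First I would split $D_2(n)$ according to whether $\mu \in Q_o(m)$ (distinct odd parts) or $\mu \in R(m) = P(m)\setminus Q_o(m)$. On the second piece, apply van Leeuwen's involution to $\mu$ alone while holding $\nu$ fixed: since van Leeuwen's map sends $R(m)$ to $R(m)$ and changes $\ell(\mu)$ by exactly one, it is a fixed-point-free involution on $\{(\mu,\nu):\mu \notin Q_o\}$ that toggles the parity of $\ell(\lambda)$. Hence this piece pairs each even-length overpartition with an odd-length partner and contributes $0$ to $d^e_2(n) - d^o_2(n)$.

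Next, on the surviving piece $\{(\mu,\nu):\mu \in Q_o,\ \nu \in Q_e\}$, the parts of $\mu$ are distinct and odd while the parts of $\nu$ are distinct and even, so they are automatically disjoint and their union $\mu \cup \nu$ is a partition of $n$ into distinct parts, i.e.\ an element of $Q(n)$. This is a length-preserving bijection onto $Q(n)$, whose inverse splits a distinct-part partition into its odd parts (forming $\mu$) and its even parts (forming the overlined $\nu$); it closely parallels the correspondence in Theorem \ref{hq}, but now the distinct odd parts come directly from $\mu$ rather than through Sylvester's map. Consequently the surviving piece contributes exactly $\sum_{\pi \in Q(n)} (-1)^{\ell(\pi)} = q^e(n) - q^o(n)$. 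Applying Franklin's involution to $Q(n)$ and invoking \eqref{legeq}, this signed count equals $(-1)^m$ when $n = \omega_m$ and $0$ otherwise, which is precisely the claimed value of $d^e_2(n) - d^o_2(n)$.

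The step I expect to require the most care is confirming that van Leeuwen's involution is genuinely defined and fixed-point-free on the intended subset, including the boundary cases: one must check that $\mu \notin Q_o(m)$ forces $m \ge 2$, so that van Leeuwen's procedure (which needs $m \ge 2$) always applies, and that the empty partition and the singleton $(1)$ land in the $Q_o$ piece rather than being left undefined. The remainder is bookkeeping to verify that the union map is a length-preserving bijection onto $Q(n)$.
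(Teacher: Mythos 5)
Your proposal is correct and matches the paper's own proof essentially step for step: van Leeuwen's involution applied to the nonoverlined component $\mu$ when $\mu \in R(m)$ (with the same boundary observation that $\mu \notin Q_o$ forces $m \ge 2$), followed by the union map $\mu \cup \nu \in Q(n)$ on the surviving piece and an appeal to Franklin's involution and \eqref{legeq}. Your signed-sum phrasing is just a repackaging of the paper's explicit pairing, so the two arguments are the same.
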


\begin{proof}
Write $\lambda \in D_2(n)$ as the bipartition $(\mu, \nu)$ where $\mu$ consists of any nonoverlined parts and $\nu$ corresponds to any overlined parts.  If $\mu$ is nonempty and an element of $R(m) = P(m) \setminus Q_o(m)$ for some $m \le n$, then match $\lambda$ with the element of $D_2(n)$ having bipartition $(L(\mu),\nu)$.  That is, apply van Leeuwen's map to the nonoverlined parts and fix any overlined parts.  Recall that the lengths of $\mu$ and $L(\mu)$ differ by one, thus the lengths of $\lambda$ and its image differ by one.

If $(\mu, \nu)$ associated with $\lambda$ has $\mu$ empty or $\mu$ consisting of distinct odd parts, then $\mu \cup \nu$ (i.e., $\lambda$ ignoring any overlines) is an element of $Q(n)$.  In these cases, apply Franklin's map to  $\mu \cup \nu$ and overline any even parts.  Recall that Franklin's map changes partition length by one.

This last operation fails exactly when $n$ is a generalized pentagonal number.  The detalis of these exceptions follow from \eqref{legeq}.
\end{proof}

We provide an example of the involution for $n = 8$.  Table \ref{ltab} above shows the pairings among the partitions with no overlined parts (i.e., $\nu = \varnothing$).  The remaining cases are given in Table \ref{mostlyequalex}, first the partitions with at least one overlined part and at least one nonoverlined part although not odd and distinct, then the partitions with only overlined parts or nonoverlined parts odd and distinct.

\begin{table}[th]
\renewcommand{\arraystretch}{1.25}
\begin{tabular}{r|l}
$D^e_2(n)$ & $D^o_2(n)$ \\ \hline
$(\o6,2)$ & $(\o6,1,1)$ \\
$(6,\o2)$ & $(3,3,\o2)$ \\ 
$(\o4,4)$ & $(\o4,2,2)$ \\
$(\o4,2,1,1)$ & $(\o4,1^4)$ \\ 
$(4,\o2,1,1)$ & $(4,\o2,2)$ \\
$(\o4,\o2,1,1)$ & $(\o4,\o2,2)$\\ 
$(3,\o2,2,1)$ & $(3,\o2,1,1,1)$ \\ 
$(\o2,2,2,2)$ & $(\o2,2,2,1,1)$ \\
$(\o2,2,1^4)$ & $(\o2,1^6)$ \\ \hline
$(7,1)$ & $(\o8)$ \\ 
$(\o6,\o2)$ & $(5,\o2,1)$ \\
$(5,3)$ & $(\o4,3,1)$
\end{tabular}
\caption{Theorem \ref{mostlyequal}'s involution on $D_2(8)$ includes the pairings given in Table \ref{ltab} and the ones listed here.} \label{mostlyequalex}
\end{table}

We now combine the results of Theorem \ref{mostlyeven} and Theorem \ref{mostlyequal} to give a combinatorial proof of Theorem \ref{d2mod4}.

\begin{proof}[Second proof of Theorem \ref{d2mod4}]  For $n$ not a generalized pentagonal number, we know from Theorem \ref{mostlyequal} that $d^e_2(n) = d^o_2(n)$ and from Theorem \ref{mostlyeven} that $d^e_2(n) = 2j$ for some integer $j$.  Therefore
\[d_2(n) = d^e_2(n) + d^o_2(n) = 2j + 2j = 4j\]
as desired.

As in the generating function proof of Theorem \ref{d2mod4}, there are four cases to consider when $n$ is a generalized pentagonal number.
If $n = \omega_{4k}$, then $d^e_2(\omega_{4k}) - d^o_2(\omega_{4k}) = 1$ by Theorem \ref{mostlyequal} and $d^o_2(\omega_{4k}) = 2j$ for some integer $j$ by Theorem \ref{mostlyeven}, therefore
\[d_2(\omega_{4k}) = d^e_2(\omega_{4k}) + d^o_2(\omega_{4k}) = (2j+1) + 2j = 4j + 1 \equiv 1 \bmod{4}.\]

If $n = \omega_{4k+1}$, then $d^e_2(\omega_{4k+1}) - d^o_2(\omega_{4k+1}) = -1$ by Theorem \ref{mostlyequal} and $d^o_2(\omega_{4k+1}) = 2j$ for some integer $j$ by Theorem \ref{mostlyeven}, therefore
\[d_2(\omega_{4k+1}) = d^e_2(\omega_{4k+1}) + d^o_2(\omega_{4k+1}) = (2j-1) + 2j = 4j - 1 \equiv 3 \bmod{4}.\]

If $n = \omega_{4k+2}$, then $d^e_2(\omega_{4k+2}) - d^o_2(\omega_{4k+2}) = 1$ by Theorem \ref{mostlyequal} and $d^e_2(\omega_{4k+2}) = 2j$ for some integer $j$ by Theorem \ref{mostlyeven}, therefore
\[d_2(\omega_{4k+2}) = d^e_2(\omega_{4k+2}) + d^o_2(\omega_{4k+2}) = 2j + (2j-1) = 4j - 1 \equiv 3 \bmod{4}.\]

If $n = \omega_{4k+3}$, then $d^e_2(\omega_{4k+3}) - d^o_2(\omega_{4k+3}) = -1$ by Theorem \ref{mostlyequal} and $d^e_2(\omega_{4k+3}) = 2j$ for some integer $j$ by Theorem \ref{mostlyeven}, therefore
\[d_2(\omega_{4k+3}) = d^e_2(\omega_{4k+3}) + d^o_2(\omega_{4k+3}) = 2j + (2j+1) = 4j + 1 \equiv 1 \bmod{4}.\qedhere \]
\end{proof}

\section*{Acknowledgements}
This work was initiated while the first author was a participant in the University of Minnesota Duluth's Undergraduate Research Opportunities Program (UROP) during the Fall 2023 semester.  We gratefully acknowledge the support of the University of Minnesota Duluth.

\end{document}